\tikzset{
>=stealth',
}
\newcounter{theorems}
\newtheorem{prop}[theorems]{Proposition}
\newtheorem{thm}[theorems]{Theorem}
\theoremstyle{definition}
\theoremstyle{remark}
\newtheorem*{ex}{Example}
\newtheorem{rem}[theorems]{Remark}
\newtheorem{lemma}[theorems]{Lemma}
\newtheorem{theoremA}[theorems]{Theorem A}
\newtheorem*{rem*}{Remark}
\def\blfootnote{\gdef\@thefnmark{}\@footnotetext}
\def\om {\omega }
\def\e {\varepsilon }
\def\d {\delta}
\def\phi {\varphi}
\def\be {\begin{equation}}
\def\ee {\end{equation}}
\def\bt {\begin{thm}}
\def\et {\end{thm}}
\def\NN {\mathbb N}
\DeclareMathOperator{\dist}{dist}
\def \XX {{\mathbb X}}
\newcommand{\diam}{\operatorname{diameter}}
\newcommand{\interior}{\operatorname{interior}}
\def \ca {\mathcal{A}}
\def\NN {\mathbb N}
\begin{document}

\title{Attractors with Non-Invariant Interior and Pinheiro's Theorem A}
\author{Stanislav Minkov\footnote{Brook Institute of Electronic Control Machines, Moscow, Russia; stanislav.minkov@yandex.ru}, \; Alexey Okunev, \ Ivan Shilin \hphantom{1} }
\date{}

\maketitle

\begin{abstract}
This is a provisional version of an article, intended to be devoted to properties of attractor's intertior for smooth maps (not diffeomorphisms). We were originally motivated for this research by Pinhero's Theorem A from his preprint~\cite{P}, and in Section~\ref{s:proof-A} we give a simple and straightforward proof of this result.
\end{abstract}

\section{ Pinheiro's Theorem A}
In this section we quote the statement of Theorem~A from Pinheiro's preprint~\cite{P}. We will give a simple and straightforward proof of this result in Section~\ref{s:proof-A} below.

Let $\XX$ be a compact metric space.
Given a compact %
 set $A$ such that $f(A)=A$ \footnote{In \cite{P} sets with this property are called forward invariant sets, but we prefer another convention: in the text below <<$A$ is forward invariant>> means $f(A)\subset A$.}, define the {\bf\em basin of attraction of $A$} as $$\beta_f(A)=\{x\in\XX\,;\,\omega_f(x)\subset A\}.$$
Following Milnor's definition of topological attractor (indeed, minimal topological 
attractor \cite{M}), a compact forward invariant set $A$
is called  {\bf\em a topological attractor} if
$\beta_f({A})$ is not a meager set and $\beta_f(A)\setminus\beta_f(A')$ is not a meager set   for every compact forward invariant set $A'\subsetneqq A$. According to Guckenheimer \cite{G}, a set $\Lambda\subset\XX$ has {\bf\em sensitive dependence on initial condition} if exists $r>0$ such that $\sup_{n}\diam(f^n(\Lambda\cap B_{\varepsilon}(x)))\ge r$ for every $x\in\Lambda$ and $\varepsilon>0$.

If $\cup_{n\ge 0} f^n(V)=X$ for every open set $V \subset X$, then $f$ is called  {\bf\em  strongly transitive} on $X$.

\begin{theoremA}
\label{thm:A}Let $f:\XX\circlearrowleft$ be a continuous {\it open} map defined on a compact metric space $\XX$. 
If there exists $\delta>0$ such that $\overline{\bigcup_{n\ge0}f^n(U)}$ contains some open ball of radius $\delta$, for every nonempty open set $U\subset\XX$, then there exists a finite collection of topological attractors $A_1,\cdots, A_\ell$ satisfying  the following properties.
\begin{enumerate}
\item $\beta_f(A_j)\cup\cdots\cup\beta_f(A_{\ell})$ contains an open and dense subset of $\XX$.
\item  Each $A_j$ contains an open ball of radius $\delta$ and $A_j=\overline{\interior(A_j)}$.
\item  Each $A_j$ is transitive and $\omega_f(x)=A_j$ generically on  $\beta_f(A_j)$.
\item $\overline{\Omega(f)\setminus\bigcup_{j=0}^{\ell}A_j}$ is a compact set with empty interior.
\end{enumerate}
Furthermore, if $\bigcup_{n\ge0}f^n(U)$ contains some open ball of radius $\delta$, for every nonempty open set $U\subset\XX$, then the following statements are true.
\begin{enumerate}\setcounter{enumi}{4}
\item For each $A_j$ there is a set $\ca_j\subset A_j$ containing an open and dense subset of $A_j$ such that $f(\ca_j)=\ca_j$ and $f|_{\ca_j}$ is strongly transitive.
\item Either $\omega_f(x)=A_j$ for every $x\in\ca_j$ or $A_j$ has sensitive dependence on initial conditions.
\end{enumerate}
\end{theoremA}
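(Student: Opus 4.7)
The plan is to identify $A_1,\dots,A_\ell$ with the minimal elements of the family
\[
\F \;=\; \bigl\{\,\overline{O(U)} : U\subset\XX \text{ nonempty open}\,\bigr\},\qquad O(U):=\bigcup_{n\ge 0}f^n(U).
\]
Because $f$ is open, $O(U)$ is open and so every $A\in\F$ is the closure of a nonempty open set (hence $A=\overline{\interior A}$), is compact and forward invariant, and by hypothesis contains a $\delta$-ball.

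Existence and finiteness of the $A_j$ would come from Zorn's lemma applied to $(\F,\subset)$. For a descending chain $\{A_\alpha\}\subset\F$, the sets $C_\alpha=\{x\in\XX : B(x,\delta)\subset A_\alpha\}$ are closed, nonempty, and decrease with $\alpha$, so the finite intersection property on the compact space $\XX$ gives $x^\star\in\bigcap_\alpha C_\alpha$, and $\overline{O(B(x^\star,\delta))}\in\F$ is a lower bound for the chain. Distinct minimal elements $A,A'$ must have disjoint interiors, else $V=\interior A\cap\interior A'$ would give $\overline{O(V)}\in\F$ strictly below both. Since only finitely many pairwise disjoint $\delta$-balls fit in the totally bounded space $\XX$, there are only finitely many minimal elements $A_1,\dots,A_\ell$.

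Properties~(2) and~(3) are essentially automatic: $A_j$ is the closure of an open set and contains a $\delta$-ball; transitivity is the minimality statement, since any nonempty open $V\subset\interior A_j$ gives $\overline{O(V)}\in\F$ with $\overline{O(V)}\subset A_j$, forcing equality. The generic conclusion $\omega_f(x)=A_j$ is a Baire argument: for each element $W$ of a countable basis of $A_j$, openness of $f$ and transitivity imply $\bigcup_{n\ge N}f^{-n}(W)\cap\interior A_j$ is open and dense in $\interior A_j$, so $R_W:=\bigcap_N\bigcup_{n\ge N}f^{-n}(W)$ is residual there; intersecting over~$W$ and pulling back along iterates gives a residual subset of $\bigcup_n f^{-n}(\interior A_j)\subset\beta_f(A_j)$ on which $\omega_f(x)=A_j$. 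Property~(1) follows because any nonempty open $U\subset\XX$ has $\overline{O(U)}\in\F$ containing some minimal $A_j$ (apply Zorn inside $\{A\in\F:A\subset\overline{O(U)}\}$), and density of $O(U)$ in $\overline{O(U)}\supset\interior A_j$ gives $U\cap f^{-n}(\interior A_j)\ne\emptyset$; thus $\bigcup_{j,n}f^{-n}(\interior A_j)$ is an open dense subset of $\XX$ inside $\bigcup_j\beta_f(A_j)$. For~(4), an open ball $B\subset\overline{\Omega(f)\setminus\bigcup A_j}$ would, by~(1), contain an open $V$ disjoint from $\bigcup A_j$ with $f^{n_0}(V)\subset\interior A_k$; a non-wandering $x\in V\setminus\bigcup A_j$ would then have all its return times in $\{1,\dots,n_0-1\}$ (since $f^j(x)\in A_k$ is outside $V$ for $j\ge n_0$), and pigeonhole would force $x$ periodic with $f^{n_0}(x)\in A_k$, pushing $x$ itself into $A_k$---a contradiction.

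For~(5) and~(6), under the stronger hypothesis $O(U)$ already contains a $\delta$-ball for every open~$U$. I would set $\ca_j:=\bigcap_V\bigcup_{n\ge 0}f^n(V)$, with $V$ ranging over a countable basis of $\interior A_j$; the strengthened hypothesis together with minimality of $A_j$ makes each $\bigcup_n f^n(V)$ open and dense in $A_j$, so $\ca_j$ is $G_\delta$-dense in $A_j$, satisfies $f(\ca_j)=\ca_j$, and is strongly transitive by construction. The dichotomy in~(6) is then the standard equicontinuous/sensitive alternative for transitive systems applied to $f|_{A_j}$. The main difficulty I expect is the Zorn step---producing the $\delta$-ball inside $\bigcap_\alpha A_\alpha$ is the delicate compactness argument on which everything else rests---together with arranging $\ca_j$ in~(5) so that strong transitivity actually holds: one needs $\bigcup_n f^n(V)$ to equal, not merely be dense in, $\ca_j$, which requires a further iteration of the construction exploiting the stronger hypothesis.
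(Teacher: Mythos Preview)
Your route to (1)--(4) is different from the paper's and essentially correct. The paper never considers the family $\F$ or invokes Zorn; instead it applies the semicontinuity lemma to the map $x\mapsto\overline{\mathrm{Orb}^+(x)}$, obtains a residual set $R$ of continuity points, shows that $\omega_f(x)$ contains a $\delta$-ball for every $x\in R$, and takes the $A_j$ to be the finitely many distinct $\omega$-limit sets that arise this way. Your approach gives (2) almost for free---each minimal $\overline{O(U)}$ is visibly the closure of an open set once $f$ is open---whereas the paper has to argue separately that openness of $f$ forces $\interior A_j$ forward invariant and hence $A_j=\overline{\interior A_j}$. Conversely, the paper's route makes the generic statement in (3) immediate from the definition of $R$, while you run a separate Baire argument. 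Your compactness device in the Zorn step (passing to the closed sets $C_\alpha=\{x:B(x,\delta)\subset A_\alpha\}$) is the right idea and is the exact analogue of the paper's ``limit of centres'' argument in its lemma on $\delta$-balls in $\omega$-limit sets.

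For (5) there is a genuine gap. Your $\ca_j=\bigcap_V O(V)$ over a \emph{countable} basis is only a dense $G_\delta$ in $A_j$; the theorem demands that $\ca_j$ contain an \emph{open} dense subset, and your sketch gives no reason it should. The paper's key device is to replace the countable basis by a \emph{finite} cover of $A_j$ by $\delta/3$-balls $B_i$: then $E=\bigcap_i O(B_i)$ is open (finite intersection of open sets), and under the strengthened hypothesis every $O(V)$ contains a $\delta$-ball, hence swallows one of the $B_i$ whole (diameter $<\delta$), hence $O(V)\supset O(B_i)\supset E$. This finiteness is precisely what upgrades ``each $O(V)$ is dense'' to ``each $O(V)$ contains a fixed open set'', and that is what strong transitivity needs. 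Your closing paragraph flags the difficulty but does not supply this mechanism. The equation $f(\ca_j)=\ca_j$ is also not clear from your construction: you get $f(\ca_j)\subset\ca_j$, but the reverse inclusion does not follow from $\ca_j=\bigcap_V O(V)$; the paper obtains it by further taking $\bigcap_{n\ge1}f^n(E)$ and then identifying the result with a single $O(B_i)$.

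For (6), the appeal to a ``standard equicontinuous/sensitive alternative'' is too loose: the stated dichotomy is ``every $x\in\ca_j$ has $\omega_f(x)=A_j$'' versus sensitivity, which is not literally Auslander--Yorke. The paper argues directly, calling a point \emph{$r$-punctured} if its $\omega$-limit set misses some $r$-ball in $A_j$, and splitting on whether, for some $r>0$, the $3r$-punctured points are plentiful enough to form a $\delta$-cover of $A_j$.
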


\section{A simple continuous map whose attractor has non-invariant interior}
The original statement by Pinheiro \cite[Theorem~A]{P} did not require the map $f$ to be open. Yet the proof implicitly used the invariance of the attractor interior under the dynamics, which is not true in general. %

Here is a counterexample to the original Pinheiro theorem~A.
Let $\XX$ be $[-1,1]\cup 2$, and let $f([-1,1]) = \{2\}$ and $f(2) = 0$. Clearly, $f$ is continuous. Every orbit of $f$ contains the point~$2$, and therefore contains a ball of radius 0.5 centered at~$2$ (this ball coincides with $\{2\}$). The attractor $A_1$ of $f$ is $\{0, 2\}$, with 2 being an interior point of~$A_1$ and 0 being a boundary point. Therefore $A_1 \neq \overline{\interior(A_1)}$, and the statement (2) of Theorem~A is false. Moreover, the interior point 2 of $A_1$ is taken into a boundary point~0.

Counterexamples for path-connected manifolds are also possible.

In this paper we slightly alter the statement of the theorem by requiring $f$ to be open. We will only use this assumption in the proofs of claims~(2) and~(5).

\section{Proof of Theorem~A} \label{s:proof-A}

\subsection*{Lemmas on the $\omega$-limit sets}

Consider a map that takes a point of the phase space into the closure of its positive semi-orbit under~$f$. Denote this map by $\Psi$ and observe that it is lower semicontinuous (w.r.t. the Hausdorff distance between compact subsets of~$\XX$), since finite parts of the forward semi-orbit depend continuously on the initial point. By the semicontinuity lemma~\cite{S}
the set $R$ of continuity points of $\Psi$ is residual in the phase space.

\begin{lemma}\label{lem:d_ball}
In the assumptions of Theorem~A, for any point $x \in R$ the $\omega$-limit set of~$x$ contains an open ball of radius~$\d$.
\end{lemma}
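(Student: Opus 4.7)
The plan is to combine the Hausdorff continuity of $\Psi$ at $x\in R$ with the hypothesis of Theorem~A to produce an open $\delta$-ball inside $\Psi(x)=\overline{\{f^n(x):n\geq 0\}}$, and then to pass from $\Psi(x)$ to $\omega_f(x)$, exploiting that the two sets differ only by the countable forward orbit of $x$.

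First, fix $x\in R$. For each $k\in\NN$, continuity of $\Psi$ at $x$ gives $\rho_k>0$ such that $d_H(\Psi(y),\Psi(x))<1/k$ for every $y\in B_{\rho_k}(x)$. The hypothesis of Theorem~A, applied to the open set $B_{\rho_k}(x)$, produces $c_k\in\XX$ with
$B_\delta(c_k)\subset\overline{\bigcup_{n\geq 0}f^n(B_{\rho_k}(x))}$.
By compactness of $\XX$, along a subsequence $c_k\to c$ for some $c\in\XX$.

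The next step is to show $B_\delta(c)\subset\Psi(x)$. Given $z\in B_\delta(c)$, for all sufficiently large $k$ one also has $z\in B_\delta(c_k)$, so $z=\lim_j f^{n_j}(y_j)$ for some sequences $y_j\in B_{\rho_k}(x)$ and $n_j\geq 0$. Each $f^{n_j}(y_j)$ lies in $\Psi(y_j)$, and the bound $d_H(\Psi(y_j),\Psi(x))<1/k$ forces $\dist(f^{n_j}(y_j),\Psi(x))<1/k$. Letting first $j\to\infty$ and then $k\to\infty$, using that $\Psi(x)$ is closed, we obtain $z\in\Psi(x)$.

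It remains to upgrade this inclusion to $\omega_f(x)\supset B_\delta(c)$. For every $N$, the difference $\Psi(x)\setminus\Psi(f^N(x))$ is contained in the finite set $\{x,f(x),\ldots,f^{N-1}(x)\}$, so $B_\delta(c)$ lies in $\Psi(f^N(x))$ up to finitely many exceptions. Provided the ball $B_\delta(c)$ contains no points isolated in $\XX$, removing finitely many points does not affect its closure, and the closed set $\Psi(f^N(x))$ must therefore contain all of $B_\delta(c)$; intersecting over $N$ gives $\omega_f(x)=\bigcap_N\Psi(f^N(x))\supset B_\delta(c)$. The subtle point I expect to be the main obstacle is the edge case where $B_\delta(c)$ meets an isolated point of $\XX$; here openness of $f$ (assumed in the modified Theorem~A) should rescue us, since the image of an open singleton under an open map is again an open singleton, forcing enough recurrence to place such points in $\omega_f(x)$ as well.
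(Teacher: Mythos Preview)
Your first step—producing a $\delta$-ball inside $\Psi(x)$ via continuity of $\Psi$ at $x\in R$ together with a limit-of-centers compactness argument—is essentially identical to the paper's.

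The divergence is in the passage from $\Psi(x)$ to $\omega_f(x)$. You try to keep the \emph{same} ball $B_\delta(c)$ and argue it survives in every $\Psi(f^N(x))$ because only the finitely many points $x,\dots,f^{N-1}(x)$ are removed. This is clean when $B_\delta(c)$ has no isolated points of $\XX$, but your treatment of the isolated-point edge case is a genuine gap. Knowing that an open map sends open singletons to open singletons tells you the tail of the orbit from $f^m(x)$ onward consists of isolated points, but it does \emph{not} force any particular isolated point $p=f^m(x)\in B_\delta(c)$ to recur: $p$ may appear exactly once in the orbit, and then (since $p$ is isolated) $p\notin\Psi(f^N(x))$ for $N>m$, so $p\notin\omega_f(x)$ and $B_\delta(c)\not\subset\omega_f(x)$. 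Your phrase ``forcing enough recurrence'' is a hope, not an argument. Note also that the paper proves this lemma \emph{without} using openness of $f$ (openness enters only in claims~(2) and~(5)), so leaning on it here is against the grain.

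The paper's route around this is to abandon the fixed ball $B_\delta(c)$ and instead repeat the whole first step at each iterate: it uses that the continuity set $R$ is forward-invariant, so $f^n(x)\in R$ and the same argument yields a $\delta$-ball inside each $O_n=\Psi(f^n(x))$. Since the $O_n$ are nested, a second limit-of-centers argument (the same trick you already used) then places an open $\delta$-ball in $\bigcap_n O_n=\omega_f(x)$. This sidesteps the isolated-point issue entirely, at the cost of invoking the invariance of $R$.
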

\begin{proof}
Let $U_n = B_{1/n}(x)$ be an open ball of radius $1/n$ centered at~$x \in R$. By assumption,  the closure of the forward orbit of $U_n$ contains an open ball of radius $\d$. Denote the center of this ball by $y_n$. Let $y_0$ be an accumulation point for the sequence $\{y_n\}$. Then a $\d$-ball centered at $y_0$ is contained in the set $\Psi(x) = \overline{\rm{Orb^+(x)}}$. Indeed, if this is not the case and there is a point~$z$ of this ball outside~$\Psi(x)$, then a ball of small radius $\e$ centered at $z$ is disjoint from~$\Psi(x)$. But then for every point $\hat{x}$ in a sufficiently small neighborhood of~$x$ the set $\Psi(\hat{x})$ is $\e/2$-close to $\Psi({x})$ (since $x$ is a continuity point for the map~$\Psi$) and hence does not contain~$z$. But this is in contradiction with the point~$z$ being in  $\overline{\bigcup_{j\ge0}f^{j}(U_{n_k})}$ for a sequence $n_k \to +\infty$. This yields that the set $\Psi(x)$
contains the $\d$-ball centered at~$y_0$. Since the continuity set $R$ is invariant, an analogous statement is true for $f^n(x), \; n \in \NN$. Recall that $\omega(x) = \cap_{n \ge 0} \overline{\rm{Orb^+(f^n(x))}}$. Each set $O_n = \overline{\rm{Orb^+(f^n(x))}}$ contains a $\d$-ball. As above, we take a subsequence of centers of the balls that converges to some point $z_0$ and observe that for a $(\d-\e)$-ball centered at $z_0$ we can find arbitrarily large $n$ such that this ball is contained in $O_n$. Since $O_n$ form a nested sequence, $B_{\d -\e}(z_0)$ is contained in the intersection $\cap_n O_n = \omega(x)$. Hence, there is an open $\d$-ball in $\omega(x)$.
\end{proof}
\begin{lemma}
\label{lem:coinc}
If the interiors of $\om(a)$ and $\om(b)$ have nonempty intersection for $a, b \in R$, then $\om(a) = \om(b)$.
\end{lemma}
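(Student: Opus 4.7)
The plan is to exploit forward invariance of $\omega$-limit sets together with the open-ball hypothesis on the intersection of their interiors. The condition $a, b \in R$ will not actually be needed for the argument itself; it is there to ensure (via Lemma~\ref{lem:d_ball}) that $\om(a)$ and $\om(b)$ have nonempty interior so that the hypothesis of Lemma~\ref{lem:coinc} is meaningful.

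First, pick a point $p \in \interior(\om(a)) \cap \interior(\om(b))$, and choose an open neighborhood $V$ of $p$ with $V \subset \om(a) \cap \om(b)$. Since $p \in \om(b)$, there is a sequence $n_k \to \infty$ with $f^{n_k}(b) \to p$, so for all sufficiently large $k$ the point $f^{n_k}(b)$ lies in $V$, and hence in $\om(a)$. Fix one such $m = n_k$ so that $f^m(b) \in \om(a)$.

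Next, I invoke the standard fact that for a continuous map on a compact metric space, the $\omega$-limit set $\om(a)$ is closed and strictly invariant ($f(\om(a)) = \om(a)$); in particular it is forward invariant, so the entire forward orbit of $f^m(b)$ lies in $\om(a)$. Consequently $\om(f^m(b)) \subset \om(a)$. Since $\om(f^m(b)) = \om(b)$, this gives the inclusion $\om(b) \subset \om(a)$. Swapping the roles of $a$ and $b$ (the argument is symmetric) yields the reverse inclusion, completing the proof.

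There is no real obstacle here: the argument is just (i) the orbit of $b$ must actually enter the common interior because that interior lies in $\om(b)$, and (ii) once inside $\om(a)$ it can never leave. The only point where one might trip is to remember that $\om(a)$ is genuinely forward invariant (not only that $a$'s forward orbit accumulates on it), which is the one-line consequence of continuity of $f$.
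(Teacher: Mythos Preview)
Your proof is correct and essentially identical to the paper's own argument: both pick an open ball (or open neighborhood) in the common interior, use that the forward orbit of $b$ must enter it, and then invoke forward invariance of $\om(a)$ to trap the tail of the orbit and conclude $\om(b)\subset\om(a)$, finishing by symmetry. Your remark that membership in $R$ is not actually used in the argument is also accurate; the paper's proof does not use it either.
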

\begin{proof}
If the two interiors intersect, then the set $\omega(a) \cap \omega (b)$ contains an open ball~$B$. Since the orbit of $b$ must approach every point of this ball, there is $N$ such that $f^N(b) \in B \subset \om(a)$, and hence for any $n > N$ we have  $f^n(b) \in \omega (a)$, by the invariance of $\omega(a)$, which yields $\omega(b) \subset \omega (a)$. Analogously, $\omega(a) \subset \omega (b)$.
\end{proof}

\subsection*{Proof of claims (1)--(4)}

Let us say that the two points in $R$ are equivalent if the interiors of their $\om$-limit sets intersect (this relation is transitive by Lemma~\ref{lem:coinc}). The set $R$ then splits into a finite number of equivalence classes: indeed, for each class the $\om$-limit set of its point contains an open $\d$-ball and those balls for different classes are disjoint, but one can fit only a finite number of disjoint $\d$-balls into a compact metric space. 

The attractors $A_j$ are exactly the $\om$-limit sets that correspond to these equivalence classes. The basin of each $A_j$ is open. Indeed, let $A_j = \omega(x)$. The set $A_j$ contains a ball, so any point~$y$ close to $x$ will get inside this ball under the iterates of~$f$, by continuity, and so it will be attracted to $A_j$: $\om(y) \subset A_j$. Hence, the basin of $A_j$ is open. Also, since for any $x$ in the residual set $R$ the limit set $\om(x)$ coincides with some $A_j$, a proper subset of any $A_j$ contains $\om(y)$ only for a meager set of $y$-s, so each $A_j$ is a topological Milnor attractor.

Since each $A_j$ contains an open ball, it contains a point $y \in R$. But then $A_j = \om(y)$, and so the forward orbit of $y$ is dense in $A_j$. As the point $y$ is recurrent, this makes the attractor transitive. This, together with the genericity of $R$ in $\beta_f(A_j)$ yields claim 3. Now, we are assuming that $f$ is an open map. This implies that $f(\mathrm{int}(A_j)) \subset \mathrm{int}(A_j)$, and so any $\omega$-limit point for a point $z \in \mathrm{int}(A_j)$ is in $\overline{\mathrm{int}(A_j)}$. Since we can take $z\in R$ with $\om(z) = A_j$, this yields that $A_j$ coincides with $\overline{\mathrm{int}(A_j)}$. This finishes the proof of claim~(2).

Observe that if a non-wandering point is in $\beta_f(A_j)$, it belongs to $A_j$. Indeed, the orbit of this point visits a $\d$-ball inside $A_j$, and hence a small neighborhood of this point is taken inside $A_j$ by some iterate of $f$, say $f^N$. But since the point is non-wandering, there is $K > N$ such that the image of this neighborhood under $f^K$ (the image is contained in $A_j$) intersects the neighborhood itself. This implies that our non-wandering point is accumulated by the points of $A_j$, and so it belongs to $A_j = \overline{A_j}$. Hence, if a non-wandering point does not belong to the union of the attractors $A_j$, it does not belong to the union of $\beta_f(A_j)$, but the complement of the latter union is contained in a closed set with empty interior, by claim~(1), and this implies claim~(4).

\subsection*{Proof of claims (5)--(6)}

Note that it is not very important that the sets $\mathcal A_j$ in claims 5 and 6 of the theorem coincide: we can construct two different forward-invariant subsets $\mathcal A^5_j$, $\mathcal A^6_j$ that contain open and dense subsets of $A_j$ and have the properties from claims 5 and 6 respectively \footnote{One can check that sensitive dependence on initial conditions on a set $S$ is inherited by any residual subset of $S$.} and then take their intersection as $\mathcal A_j$. 

In the rest of the proof, we will call the assumption that for any nonempty open $U$ the union of its images contains a $\d$-ball \emph{the main assumption}.

Fix some $j$ and consider a finite set $\{x_i\}$ such that the union of $\delta / 3$-balls centered at $x_i$ covers $A_j$. Let us denote by $B_i$ the intersections of these balls with $A_j$. %
We will refer to $\{x_i\}$ as the $\delta / 3$-covering for $A_j$.%
Let $C_i = \bigcup_{n\in \mathbb N_0} f^{n}(B_i)$. Each $C_j$ is open with respect to the subset topology on $A_j$: the balls $B_i$ are open in this topology and the restriction $f|_{A_j}$ is open since $f$ is. The attractor $A_j$ is transitive, its dense forward orbit visits each $B_i$, so each $C_i =\bigcup_{n \ge 0} f^{n}(B_i)$ must be dense in~$A_j$.

Let $E = \cap_i (C_i)$ and $\mathcal A^5_j = \cap_{n\in \mathbb N} f^n (E)$. It is clear that $f(\mathcal A^5_j)=\mathcal A^5_j$
The set $E$ is open in~$A_j$ as a finite intersection of open sets $C_i$. By claim~(2) this set has nonempty intersection with $\mathrm{int}(A_j)$, so it contains a small open ball (i.e., the ball is open in $\XX$). Since $E$ is forward invariant ($f(E)\subset E$) %
, it contains the union of the images of this ball, and this union contain a $\d$-ball of the phase space, by the main assumption. The same argument applies to $f(E)$ (recall that $f|_{A_j}$ is open) and every $f^n(E)$. Since the sets $f^n(E)$ form a nested sequence, their intersection also contains a $\d$-ball (one can argue as in the proof of Lemma~\ref{lem:d_ball}). So, $\mathcal A^5_j$ contains a $\d$-ball and, by forward-invariance, all of its images. But there is a point in this ball whose forward orbit is dense in $A_j$, so the union of open $f^n$-images of the ball is open and dense in $A_j$, so $\mathcal A^5_j$ contains an open (w.r.t. the topology of $\XX$) subset which is dense in~$A_j$.

Take any $U$ open in $A_j$. Denote by $V$ the union of the $f^n$-images of~$U$. By claim~(2) and the main assumption, there is a ball of radius $\delta$ in $V$. This ball contains one of the sets $B_i$ (because those have diameter $2\delta/3$ and cover $A_j$), and hence $V$ contains the set $C_j$, and so $V$ contains the whole $\mathcal A^5_j$, which means that $f|_{\mathcal A^5_j}$ is strongly transitive. Also note that, as we showed, $\mathcal A^5_j$ coincides with some~$C_i$.

Now let us prove claim~(6) of the theorem.
We fix some attractor $A_j$ and consider a point in its basin. We will call such a point $r$-punctured if its $\omega$-limit set has empty intersection with some open $r$-ball in $A_j$.

Either there exists an $r$ such that $A_j$ admits a $\d$-covering (with elements contained in $A_j$) that consists of $3r$-punctured points, or not.

In the first case let us take an arbitrary ball $G$ in $A_j$ (i.e., we regard $A_j$ as a metric space and take a ball in it).
The union of its images contains a $\d$-ball (as $G$ contains an open subset of $\interior A_j$ by claim (2)). Then there is a point of our $\delta$-covering in this $\d$-ball, and so the union has a $3r$-punctured point.
Since any preimage of a $3r$-punctured point is also $3r$-punctured, there is a $3r$-punctured point $b$ in $G$. Denote by $B$ a $3r$-ball in $A_j$ such that $B \cap \omega(b)=\varnothing$. There exists an integer $M$ such that for $m>M$ we have that $f^m(b)$ is at a distance at least $2r$ to a center $y$ of $B$ (otherwise $B$ would intersect~$\omega(b)$; note that the ball can have more than one center). Since the attractor is transitive, there is a point $x \in G$ whose forward orbit is dense in $A_j$. Let $N>M$ be a moment of time such that $f^N(x)$ is $r$-close to~$y$. Then $dist (f^N(x), f^N(b))>r$, and we have sensitive dependence on initial condition, with this~$r$.

Now suppose that for any $r$ there is no $\d$-covering of $3r$-punctured points for $A_j$. For ${r=1/n}$ there must be a $\d$-ball in $A_j$ that has no $3/n$-punctured points: otherwise we would construct a $\d$-covering of punctured points. Denote a center of this ball by~$w_n$. Let $w_0$ be an accumulation point for $\{w_n\}$. In a $\d/2$-ball cantered at $w_0$ there are no $r$-punctured points, for arbitrary~$r$; denote this ball by~$P$. For every point in $P$ its forward orbit is dense in $A_j$ --- otherwise there would be $r$-punctured points. Let $\mathcal A^6_j = \bigcup_{k\in \mathbb Z} f^k(P) \cap A_j$. This set satisfies $f(\mathcal A^6_j) = \mathcal A^6_j$ and consists of points whose forward orbits are dense in $A_j$. 

The subset $\hat{\mathcal A}^6_j = \bigcup_{n\in \mathbb N_0} f^{-n}(P) \cap A_j \subset \mathcal A^6_j$ is open and dense in $A_j$.
Indeed, it is open as a union of preimages of open subset $P$ of $A_j$ under continuous $f|_{A_j}$. Suppose that this set is not dense in~$A_j$, that is, there is a ball $V$ disjoint from it. But any point of~$P$ is taken into~$V$ by some iterate of $f$, and then gets back to $P$ (recall that for $z \in P$ the forward orbit is dense), so some points of $V$ are preimages of points in~$P$, i.e., they belong to $\hat {\mathcal {A}^6_j}$. This contradiction shows that $\hat{\mathcal A}^6_j$ is dense in $A_j$.

Thus, $\mathcal A^6_j$ has the properties from claim~(6).

\section{A smooth map with non-invariant attractor interior on a solid torus}

In this section we present an example of a smooth map from a path-connected manifold to itself that has a Milnor topological attractor whose interior is not forward invariant.

\begin{thm}
There exists a smooth map $F$  on a solid torus with a structure of a skew product over the octupling map on a circle $S^1$: $\varphi \mapsto 8 \varphi$ with a Milnor topological attractor whose interior is not forward invariant under~$F$.
\end{thm}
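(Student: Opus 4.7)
The plan is to construct a smooth skew product $F(\varphi, z) = (8\varphi, f_\varphi(z))$ on the solid torus $S^1 \times D^2$ whose Milnor attractor has the form $A = S^1 \times D_1$ for a closed disk $D_1 \subsetneq D^2$, and whose fiber map over a particular arc of $S^1$ collapses the fiber to a single boundary point of $D_1$. Fix such a disk $D_1 \subset \interior(D^2)$ and a point $q_0 \in \partial D_1$. Select eight smooth maps $F_0, \dots, F_7 \colon D^2 \to D_1$: let $F_0 \equiv q_0$ be the constant map, and let $F_1, \dots, F_7$ be affine contractions $F_i(z) = \lambda z + v_i$ chosen so that $\bigcup_{i=1}^{7} F_i(D_1) = D_1$, making $D_1$ the attractor of the IFS $\{F_1, \dots, F_7\}$. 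Partition $S^1$ into eight arcs $I_0, \dots, I_7$ of equal length $1/8$, each mapped bijectively onto $S^1$ by the octupling map, and pick a smooth partition of unity $\{\chi_i\}_{i=0}^7$ on $S^1$ with $\chi_i \equiv 1$ on the central third $J_i$ of $I_i$. Set $f_\varphi(z) = \sum_{i=0}^{7} \chi_i(\varphi) F_i(z)$; since $D_1$ is convex and each $F_i(z) \in D_1$, so is $f_\varphi(z)$, and hence $F(S^1 \times D^2) \subset S^1 \times D_1 =: A$.

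To see that $A$ is the Milnor attractor of $F$, note that $A$ is compact, forward invariant and $\beta_F(A) = S^1 \times D^2$; so it suffices to show $\omega_F(\varphi, z) = A$ on a residual subset of the phase space. For a residual set of $\varphi$, the symbolic coding of the orbit under $\varphi \mapsto 8\varphi$ contains every finite word, so the orbit visits each $J_i$ infinitely often. Each visit to $J_0$ resets the fiber exactly to $q_0$, and on the subsequent visits to the central thirds $J_1, \dots, J_7$ the fiber is evolved by pure IFS steps $F_1, \dots, F_7$. By the Barnsley chaos-game theorem (applied to the deterministic symbol sequence), the resulting fiber values are dense in the IFS attractor $D_1$; combined with density of the $\varphi$-orbit in $S^1$, this gives $\omega_F(\varphi, z) = S^1 \times D_1 = A$.

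Finally, $\interior(A) = S^1 \times \interior(D_1)$ (interior computed in $S^1 \times D^2$). For any $(\varphi, z)$ with $\varphi \in J_0$ and $z \in \interior(D_1)$, one has $(\varphi, z) \in \interior(A)$, while $F(\varphi, z) = (8\varphi, q_0) \in S^1 \times \partial D_1 \subset A \setminus \interior(A)$; so $\interior(A)$ is not forward invariant under $F$. The main technical obstacle is the Milnor-attractor verification: in the transition regions between the arcs the fiber map is only a convex combination of the $F_i$, so the fiber trajectory between successive visits to $J_0$ is only approximately an IFS orbit. Since a residual $\varphi$-orbit spends positive density of time inside $\bigcup_i J_i$, where the dynamics is exactly the IFS one, the chaos-game density argument should still go through up to a controlled perturbation.
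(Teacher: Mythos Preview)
Your strategy---skew product over $\varphi\mapsto 8\varphi$, one constant fiber map collapsing to a point, and an IFS covering a disk---is exactly the paper's. But the smoothing scheme you chose creates a genuine obstruction in the Milnor-attractor step, and the final paragraph does not overcome it.

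The problem is this. Your central thirds $J_i\subset I_i$ all satisfy $8(J_i)=[1/3,2/3)$, and only $J_3$ and $J_4$ lie inside $[1/3,2/3)$. Hence once the base orbit is in some $J_i$, the \emph{only} central thirds it can visit at the next step are $J_3$ and $J_4$; inductively, any run of consecutive ``pure'' IFS steps after a reset uses only $F_3$ and $F_4$. The deterministic chaos-game argument therefore gives density only in the attractor of $\{F_3,F_4\}$, not in $D_1$. Your remark that ``a residual orbit spends positive density of time in $\bigcup J_i$'' does not help: what you need is arbitrary finite words in $\{1,\dots,7\}$ realized by consecutive visits to the $J_i$, and that is impossible. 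In fact $F(S^1\times D_1)$ is generally a proper subset of $S^1\times D_1$ (over $\psi\notin[1/3,2/3)$ the eight preimages lie in transition zones, and the eight perturbed disks need not cover $D_1$), so $\omega_F(\varphi,z)\subset\bigcap_n F^n(S^1\times D_1)\subsetneq S^1\times D_1$ and your identification of the attractor fails.

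The paper avoids this by sacrificing half the symbols to smoothing: the fiber map is \emph{constant} on each of the four even arcs $I_0,I_2,I_4,I_6$ and interpolates on the odd arcs. Then any finite word in $\{0,2,4,6\}$ is realized exactly (no central-third constraint), and the Hutchinson lemma applies directly. A second difference: the paper does not attempt to identify the attractor as a product. It only shows the attractor \emph{contains} $S^1\times D$ with $D$ open, and places the collapse point $q$ in a region where nearby off-axis points provably have no preimage, hence lie outside the attractor whatever its exact shape. Your choice $q_0\in\partial D_1$ only gives a boundary point \emph{if} the attractor is exactly $S^1\times D_1$, which is precisely what is in doubt.
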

\begin{proof}

The fiber $M$ of our skew product is a two-dimensional disk with radius~3. The skew product itself has the form
\[S^1 \times M \ni (\varphi, x) \stackrel{F}{\mapsto} (8 \varphi, f_{\varphi}(x)),\]
and we refer to $f_\varphi$ as fiber maps. The octupling map on $S^1$ is semi-conjugate via the so-called symbolic coding with the left shift~$\sigma$ on the space $\Sigma^+_8$ of one-sided infinite sequences over the alphabet $\{0, \dots 7\}$. This allows us to encode the fiber maps using these sequences and write $f_\omega$ instead of $f_\varphi$ if $\omega$ is taken to $\varphi$ by the semiconjugacy map.

In our example, the fiber maps $f_\om$ will depend only on the element at the leftmost position in the sequence~$\om$, provided that this element is even ($\{0,2,4, 6\}$). We start indexing with $0$ and write $f_{\om} = f_{\om_0}$; $\om_0$ is the element at position $0$ in the sequence~$\om$. 
The fiber maps for the sequences that start with an odd element in general depend on the whole sequence and are used primarily to make the transition between the ``even'' fiber maps smooth.

Now we describe the properties of the four fiber maps~$f_0, f_2, f_4, f^6_j$. For convenience, we choose a rectangular coordinate system on the disk $M$, such that the point $(0,0)$ is at the center of~$M$. Denote a semi-disk defined by condition $x<0$ by~$D$ and a subset defined by $x<1$ by~$Z$. Now, the three maps $f_2, f_4, f^6_j$ must have the following properties:

\begin{enumerate}[label={\arabic*)}]
   
 \item they are smooth on $M$ and contracting on $Z$ (i.e., there is $\lambda<1$ such that for any $x,y$ $\dist(f_i(x), f_i(y))<\lambda\cdot\dist(x,y)$, for $i=2,4,6$);

 \item $D \subset f_2(D)\cup f_4(D) \cup f^6_j(D)\subset Z$;

 \item $f_i(M \setminus Z) \subset D$ for $i=2,4,6$;

 \item $f_i(Z) \subset Z$ for $i=2,4,6$.

\end{enumerate}

Let the last map $f_0$ just take the whole fiber into a point~$q$ with coordinates~$(2, 0)$.

Now we are prepared to use the Hutchinson lemma for maps $f_2, f_4, f^6_j$:

\begin{lemma} (Hutchinson~\cite{Hutch}, in the form from~\cite{VI})  Consider a metric space $(M,\rho)$ and maps $f_n: M \to M$. Suppose that there exist compact sets $D  \subset Z  \subset M$ such that $f_n(Z) \subset Z$ for all $f_n$, all $f_n$ are contracting on $Z$ and $D \subset \cup f_n(D)$. Then for any open $U$, $U \cap K \neq \varnothing$, there exists a finite word $w=\omega_1\dots \omega_m$ such that $f^{[m]}_w(Z) \subset U$, where $f^{[m]}_w = f_{\omega_1}\circ\dots\circ  f_{\omega_m}$. 
\end{lemma}

The smoothing maps must belong to one of the following two types:

(i) either they take the whole~$M$ into a point at the axis $\{y=0\}$,

(ii) or they are smooth and take $M$ into a subset of~$Z$.

For convenience we describe a tuple of maps with these properties.  

\begin{ex}

Let $f_s$ take $M$ into the point $s=(-1,0)$ and let $a_{[l,r]}$ be a smooth monotonically increasing map from $[l,r]$ to $[0,1]$ whose every derivative vanishes at the endpoints: $a^{(n)}(l)=a^{(n)}(r)=0$.

Now put
\[f_{\varphi}=\left(1-a_{[\frac{3\pi}{4}, \frac{4\pi}{4}]}(\varphi)\right)\cdot f_2 + a_{[\frac{3\pi}{4}, \frac{4\pi}{4}]}(\varphi)\cdot f_4 \quad
\text{ for } \; \varphi \in \left[\frac{3\pi}{4}, \frac{4\pi}{4}\right];\]
\[f_{\varphi}=\left(1-a_{[\frac{5\pi}{4}, \frac{6\pi}{4}]}(\varphi)\right)\cdot f_4 + a_{[\frac{5\pi}{4}, \frac{6\pi}{4}]}(\varphi)\cdot f^6_j \quad
\text{ for } \; \varphi \in \left[\frac{5\pi}{4}, \frac{6\pi}{4}\right].\]

Every $f_{\varphi}$ takes $M$ to itself $M$, because $M$ is convex (and therefore if, for example, $f_2(p) \in M$ and $f_4(p) \in M$, then $(1-a)f_2(p) + a f_4(p) \in M)$. Moreover, $f_{\varphi} (M) \subset Z$, because for any $p$ $x(f_2(p))<1, x(f_4(p))<1$ and then $x((1-a)f_2(p) + a f_4(p))<1$.

For $\varphi \in [\frac{\pi}{4}, \frac{3\pi}{8}]$ let $f_{\varphi}$ be $(1-a_{[\frac{\pi}{4}, \frac{3\pi}{8}]}(\varphi))f_0 + a_{[\frac{\pi}{4}, \frac{3\pi}{8}]}(\varphi) f_s$, and for $\varphi \in [\frac{15 \pi}{8}, 2\pi]$  let $f_{\varphi}$ be $(1-a_{[\frac{15 \pi}{8}, 2\pi]}(\varphi))f_s + a_{[\frac{15 \pi}{8}, 2\pi]}(\varphi) f_0$. All this maps take $M$ into a point $(1-a)q+as$ or $(1-a)s+aq$ respectively, and this points all lie on the axis $y=0$.

At last, for $\varphi \in [\frac{3\pi}{8}, \frac{2\pi}{4}]$ we put $f_{\varphi}=(1-a_{[\frac{3\pi}{8}, \frac{2\pi}{4}]}(\varphi))f_s + a_{[\frac{3\pi}{8}, \frac{2\pi}{4}]}(\varphi) f_2$, and for $\varphi \in [\frac{7\pi}{4}, \frac{15 \pi}{8}]$ we put $f_{\varphi}=(1-a_{[\frac{7\pi}{4}, \frac{15 \pi}{8}]}(\varphi))f^6_j + a_{[\frac{7\pi}{4}, \frac{15 \pi}{8}]}(\varphi) f_s$.  Map $f_{\varphi}$ here is from $M$ to $M$, because $M$ is convex. Moreover, $f_{\varphi} (M) \subset Z$, because for all $p$ $x(f_s(p))<1$ and $x(f_2(p))<1$, and then $x((1-a)f_s(p) + a f_2(p))<1$.

Clearly, all the maps are smooth and we are done.
\end{ex}

\bigskip
\begin{rem}
Since any point in $(M \setminus Z)\cap (y\neq 0)$ has an empty preimage, such points are not in the Milnor topological attractor of~$F$.
\end{rem}

We will need the following auxiliary statement.

CLAIM. A generic sequence $\om \in \Sigma^+_8$ contains infinitely many instances of every finite word of  $0, \dots, 7$. 

\begin{proof}
The space $\Sigma^+_8$  is a Baire space. Given any finite word, the set of sequences that contain this word is open and dense in $\Sigma^+_8$. The intersection $R$ of these sets over all finite words is residual in~$\Sigma^+_8$ and has the required property. Indeed, given a word $w$ and a sequence $\om \in R$, one can find an instance of $w$ in $\om$ simply by the definition of~$R$. Now we take another word $w_1$ such that $\om$ does not contain an instance of the concatenated word $ww_1$ that starts on the left from the right end of the instance of $w$ above. Since $\om \in R$ must contain an instance of $ww_1$, this yields another instance of $w$ on the right from the first one, and we get an infinite series of instances by continuing in the same way.
\end{proof}

Denote the subset $\Sigma^+_8 \times (D \cup \{q\})$ by~$A$.

\begin{prop}
The set $A$ is a subset of the Milnor topological attractor.   \footnote{Note, that in \cite{M} it is proved, that measure Milnor attractor always exists. One can easily modify this proof to check, that Milnor topological attractor also always exists.}

\end{prop}

\begin{proof}
Let $\hat{R} = R \times M$, where $R$ is the residual subset of $\Sigma^+_8$ from the claim. The set $\hat{R}$ is residual in the phase space. Fix any point $r = (\om, x) \in \hat{R}$. Let us prove that the forward $F$-orbit of $r$ is dense in $A$. %
Given $u = (\hat{\om}, x_0) \in A$, we consider its neighborhood $U$ that consists of points whose projections to $\Sigma^+_8$ and $M$ are $\e$-close to the projections of~$u$. 

First assume that $x_0 \in D$ and denote $p = f_2(q)$, $f_2(q) \in D$ by (3). Due to Hutchinson lemma, there are $n$ and $w_p$ such that $f_{w_p}^{[n]}(p)$ is $\e$-close to $x_0$ (and $w_p$ only contains symbols $2,4,6$).
Furthermore, let $w_0$ be an initial word of the sequence $\hat{\om}$ such that any sequence that starts with $w_0$ is $\e$-close to $\hat{\om}$. Let $w$ be the word that consists of a zero, two and the word $w_p$ followed by the word $w_0$.
Since the base component $\om$ of $r$ is in $R$, it contains the word $w$; say the $w_0$-part of this word starts at position $k$.
Observe that the map $f_0$ takes any point of the fiber to $q$, then $f_2$ takes $q$ into $p$ and $f_{w_p}^{[n]}$ takes $p$ into a point that is $\e$-close to $x_0$.
Hence we have $F^k(r) \in U$: we stop iterating when the fiber-component is near $x_0$ and the $\Sigma^+_8$-component begins with $w$ and so is close to $\hat{\om}$.

If $x_0 = q$, we argue analogously, with $w$ being a word of one $0$ followed by~$w_0$. This shows that the orbit of $z \in \hat{R}$ is dense in $A$, and hence $\om(z) \supset A$ and, by the way, $A$ is a transitive set.
Therefore, $A$ is a subset of a Milnor topological attractor.
\end{proof}

Now, any point $(\om, x)\in A$ with $\om_0 = 0$ and $x \in D$ is in $\mathrm{Int}(A)$, but its $F$-image belongs to a boundary of the attractor (because points from $(M \setminus Z)\cap (y\neq 0)$ are not in the attractor). This finishes the proof of the theorem.

It is worth noting, that from density of $\hat{R}$ and from previous result one can conclude, that $F$ is a map with the Pinheiro property: for any open $V$ $\overline{\bigcup_{n\ge0}f^n(V)} \supset A$ contains a fixed ball.

An analogous proof can be done for the measure Milnor attractor, i.e., the likely limit set.

\end{proof}

\section{On generic transitive attractors with nonempty interior}

\begin{thm}
For a generic $C^1$-map $F$ from a manifold $M$ to itself, if $F$ has a transitive closed forward-invariant set~$A$ with nonempty interior, then $A$ coincides with the closure of its interior.
\end{thm}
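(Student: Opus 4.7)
The plan is to exploit the $C^1$-generic property that the critical set $\mathrm{Crit}(F) = \{x \in M : dF_x \text{ is not invertible}\}$ has empty interior, so $F$ is a local diffeomorphism on a dense open subset of $M$. This ``essential openness'' will be enough to force the closure of $\mathrm{int}(A)$ to be forward-invariant, after which a short transitivity argument yields $A = \overline{\mathrm{int}(A)}$. The main obstacle is the genericity step itself: Sard's theorem need not hold for merely $C^1$ maps, so the nowhere-density of $\mathrm{Crit}(F)$ cannot be obtained by measure-theoretic means and must be established directly by Baire category. Once that is in hand, the rest of the proof is essentially formal.

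For the genericity step, I would fix a countable basis $\{V_k\}$ of open subsets of $M$ and let $\mathcal{G}_k = \{F \in C^1(M,M) : V_k \not\subset \mathrm{Crit}(F)\}$. Each $\mathcal{G}_k$ is open, because the complementary condition $\det(dF_p) = 0$ for all $p \in V_k$ is $C^1$-closed, and each $\mathcal{G}_k$ is dense, because from any $F$ one can produce a $C^1$-small perturbation $\tilde F$ supported near a chosen point $p \in V_k$ with $d\tilde F_p$ invertible, simply by adding a suitable small linear bump to the $1$-jet of $F$ at $p$. The intersection $\bigcap_k \mathcal{G}_k$ is then residual, and on it $\mathrm{Reg}(F) := M \setminus \mathrm{Crit}(F)$ is open and dense in $M$.

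Now fix such a generic $F$ together with a transitive, closed, forward-invariant $A$ with $U := \mathrm{int}(A) \neq \varnothing$, and set $W := U \cap \mathrm{Reg}(F)$, which is open and dense in $U$. On $\mathrm{Reg}(F)$ the inverse function theorem makes $F$ a local diffeomorphism, so $F(W)$ is open in $M$; by forward-invariance $F(W) \subset F(A) \subset A$, and an open subset of $A$ must lie in $\mathrm{int}(A) = U$, giving $F(W) \subset U$. Continuity of $F$ together with density of $W$ in $U$ then yields $F(U) \subset \overline{U}$, and continuity once more gives $F(\overline{U}) \subset \overline{F(U)} \subset \overline{U}$. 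Hence $\overline{U}$ is itself forward-invariant and $F^n(\overline{U}) \subset \overline{U}$ for every $n \geq 0$.

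To conclude, by transitivity pick $x \in A$ with $\omega_F(x) = A$ (such points form a residual subset of $A$ for a topologically transitive system on a compact metric space). Since $U$ is a nonempty relatively open subset of $A$, some $F^N(x)$ lands in $U \subset \overline{U}$, and then the entire tail $\{F^{N+k}(x) : k \geq 0\}$ stays in $\overline{U}$ by forward-invariance of $\overline{U}$. The closure of this tail contains $\omega_F(x) = A$, so $A \subset \overline{U}$, and combined with the trivial inclusion $\overline{U} \subset A$ this gives $A = \overline{U}$, as desired.
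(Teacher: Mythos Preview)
Your proof is correct and shares the same genericity step as the paper (open--dense set of regular points via a Baire argument), but the deductive route afterwards is organized differently. You first show that $\overline{U}$ with $U=\mathrm{int}(A)$ is forward-invariant (using $F(W)\subset U$ for $W=U\cap\mathrm{Reg}(F)$ and then passing to closures), and only afterwards invoke transitivity. The paper instead builds, for each $N$, an open dense set $X_N\subset M$ on which $F,\,F\circ F,\dots,F^{N}$ are all regular, and then argues pointwise: given $a\in A$ and a neighborhood $V\ni a$, transitivity produces $N$ with $F^{-N}(V)\cap U\neq\varnothing$, and the open set $C=X_N\cap F^{-N}(V)\cap U$ is pushed forward by the regular map $F^N$ to an open subset of $A$ inside $V$. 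Your route is a little more conceptual (forward-invariance of $\overline{U}$ is a clean intermediate statement); the paper's route avoids passing through the existence of a point with $\omega_F(x)=A$.

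One small point to tighten: the parenthetical ``such points form a residual subset of $A$ for a topologically transitive system on a compact metric space'' is the standard statement \emph{when $A$ has no isolated points}, which you have not yet established (indeed, that is essentially what you are proving). The fix is immediate and makes your argument even shorter: once you know $F^n(U)\subset\overline U$ for all $n\ge 0$, transitivity gives that $\bigcup_{n\ge 0}F^n(U)$ meets every nonempty relatively open subset of $A$, so the closed set $\overline U\subset A$ is dense in $A$ and hence equals $A$. No appeal to a point with full $\omega$-limit set is needed.
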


\begin{proof}
\begin{enumerate}
\item For a generic $C^1$-smooth map $F \colon M \to M$ there is an open and dense set of regular points in~$M$. Indeed, fix some countable dense subset $C$ of $M$. Given a point in this subset, there is an open and dense set in the space of $C^1$-maps of $M$ such that the point is regular for maps in this set. The intersection of these open and dense sets is a residual set that consists of maps for which every point in~$C$ is regular. Clearly, the set of regular points is open by the continuity of the Jacobian.

\item Denote by $R$ the residual set of maps for which regular points form an open and dense set in~$M$. For $F \in R$ for any $N>0$ there is an open and dense set $X_N \subset M$ such that on $X_N \cup F(X_N) \cup \dots \cup F^N(X_N)$ the map $F$ is regular. Indeed, let $Y$ be the set of regular points of $F$, it is open and dense. The set $F^{-1}(Y)$ is open by continuity, let us show that it is dense. Take some open set $U$; restricting to a small open subset we can assume that $F$ is regular on $U$ and, moreover, that $F: U \mapsto F(U)$ is a diffeomorphism. This implies $F^{-1}(Y)$ is dense in $U$. We have proved that $F^{-1}(Y)$ is open and dense; we can show in the same way that all sets $F^{-k}(Y)$, $k>0$, are open and dense too.   

\item Pick open $B \subset A$ and any $a \in A$. Let $U \ni a$ be some neighborhood. By transitivity for some $N>0$ the set $F^{-N}(U)$ intersects $B$.
Denote $C = X_N \cap F^{-N}(U) \cap B$, it is an open set and $F^N$ is regular on it. Thus $F^{N}(C)$ contains an open set. We have $F^{N}(C) \subset A$, as $A$ is forward-invariant. Also, $F^{N}(C) \subset U$. Thus $U$ intersects $\mathrm{Int}(A)$. As this holds for any $U$, this means $a \in \overline{\mathrm{Int}(A)}$. 
\end{enumerate}
\end{proof}

\end{document}